\definecolor{webblue}{rgb}{0, 0, 1.0}  
\definecolor{webgreen}{rgb}{0,1.0,0} 
\definecolor{webred}{rgb}{1.0, 0, 0}   
\definecolor{lily1}{rgb}{0.5,0,0.5}
\definecolor{lily2}{rgb}{0.75,0,0.25}
\definecolor{lily3}{rgb}{0.25,0,0.75}
\definecolor{green1}{rgb}{0.25,0.25,0}
\definecolor{green2}{rgb}{0.5,0.5,0}
\definecolor{green3}{rgb}{0.75,0.75,0}
\newcommand{\F}{\mathbb{F}}
\newcommand{\vek}[1]{\boldsymbol{#1}} 
\newtheorem{theorem}{Theorem}
\newtheorem{proposition}[theorem]{Proposition}
\newtheorem{lemma}[theorem]{Lemma}
\newtheorem{corollary}[theorem]{Corollary}
\newtheorem{remark}[theorem]{Remark}
\DeclareMathOperator{\PG}{PG}
\DeclareMathOperator{\Aut}{Aut}
\providecommand{\keywords}[1]
{
  \small	
  \textbf{Keywords} #1
}
\providecommand{\subclass}[1]
{
  \small	
  \textbf{Mathematics Subject Classification (2000)} #1
}
\begin{document}

\title{The Geometry of $(t\mod{q})$-arcs}

\author[1]{Sascha Kurz}
\author[2]{Ivan Landjev}
\author[3]{Francesco Pavese}
\author[4]{Assia Rousseva}
\affil[1]{Mathematisches Institut, Universit\"at Bayreuth, D-95440 Bayreuth, Germany, sascha.kurz@uni-bayreuth.de}
\affil[2]{New Bulgarian University, 21 Montevideo str., 1618 Sofia, Bulgaria and Bulgarian Academy of Sciences, Institute of Mathematics and Informatics, 
  8 Acad G. Bonchev str., 1113 Sofia, Bulgaria, i.landjev@nbu.bg}
\affil[3]{Dipartimento di Meccanica, Matematica e Management, Politecnico di Bari, Via Orabona 4, 70125, Bari, Italy, francesco.pavese@poliba.it}
\affil[4]{Faculty of Mathematics and Informatics, Sofia University, 5 J. Bourchier blvd., 1164 Sofia, Bulgaria, assia@fmi.uni-sofia.bg}

\date{}

\maketitle

\begin{abstract}
In this paper, we give a geometric construction 
of the three strong non-lifted  $(3\mod{5})$-arcs in $\PG(3,5)$
of respective sizes 128, 143, and 168, and construct an infinite family 
of non-lifted, strong $(t\mod{q})$-arcs in $\PG(r,q)$  with $t=(q+1)/2$
for all $r\ge3$ and all odd prime powers $q$. 	

\medskip

\noindent 
\keywords{$(t\mod q)$-arcs \and linear codes \and quadrics \and caps \and quasidivisible arcs \and sets of type $(m,n)$} 

\noindent 
\subclass{51E22 \quad 51E21 \quad 94B05}
\end{abstract}

\section{Introduction}\label{sec:intro}

The strong $(t\mod{q})$-arcs were introduced 
and investigated in \cite{KLR22,LR13,LR19,LRS16} in connection with the 
extendability problem for Griesmer arcs. This problem is related in turn to the problem 
of the existence and extendability of arcs associated with Griesmer codes. 
In \cite{KLR22} the classification
of the strong $(3\mod5)$-arcs was used to rule out the 
existence of the hypothetical $[104,4,82]_5$-code, one of the four undecided cases
for  codes of dimension 4 over $\mathbb{F}_5$. It turns out that apart from the many strong 
$(3\mod5)$-arcs obtained from the canonical lifting construction, there exist
three non-lifted strong $(3\mod5)$-arcs of respective sizes 128, 143, and 168.
This is a counterexample
to the conjectured impossibility of strong $(3\mod5)$-arcs in geometries over
$\mathbb{F}_5$ in dimensions larger than 2.
The three arcs are constructed by a computer search, but display regularities
which suggest a nice geometric structure.

In this paper, we give a geometric, computer-free construction of the three
non-lifted strong $(3\mod5)$-arcs in $\PG(3,5)$. Two of them are related to
the non-degenerate quadrics of $\PG(3,5)$. Their construction can be generalized further
to larger fields and larger dimensions.

\section{Preliminaries}

We define an arc in $\PG(r,q)$ as a mapping from the point set $\mathcal{P}$ 
of the geometry to the non-negative integers:
$\mathcal{K}\colon\mathcal{P}\to\mathbb{N}_0$.
An arc $\mathcal{K}$ in $\PG(r,q)$ is called a $(t\mod{q})$-arc if $\mathcal{K}(L)\equiv t\pmod{q}$
for every line $L$. It is immediate that $\mathcal{K}(S)\equiv t\pmod{q}$ for every subspace
$S$ with $\dim S\ge1$. Increasing the multiplicity of an arbitrary point by $q$ preserves the property of
being a $(t\mod{q})$-arc. So, we can assume that the point multiplicities are integers
contained in the interval $[0,q-1]$. If the maximal point multiplicity is at most $t$ we call $\mathcal{K}$ a 
\emph{strong} $(t\mod{q})$-arc.  

The extendability of the so-called $t$-quasidivisible arcs is related to structure properties of 
$(t\mod{q})$-arcs. In particular, an $(n,s)$-arc $\mathcal{K}$ in $\PG(r,q)$ with spectrum $(a_i)$
is called
\emph{$t$-quasidivisible} with divisor $\Delta$ if $s\equiv n+t\pmod{\Delta}$ and $a_i=0$ for all
$i\not\equiv n,n+1,\ldots,n+t\pmod{\Delta}$. It is quite common in coding theory that hypothetical
Griesmer codes are associated with arcs that turn out to be $t$-quasidivisible with divisor $q$
for some $t$.
The extendability of $t$-quasidivisible arcs is related to the structure of 
particular strong $(t\mod{q})$-arcs associated with them.

Let $\mathcal{K}$ be an arc in $\PG(r,q)$ and let $\sigma:\mathbb{N}_0\to\mathbb{Q}$ be a function
satisfying $\sigma(\mathcal{K}(H))\in\mathbb{N}_0$ for every hyperplane $H$ in $\mathcal{H}$,
where $\mathcal{H}$ is the set of all hyperplanes in $\PG(r,q)$. The arc
$\mathcal{K}^{\sigma}:\mathcal{H}\to\mathbb{N}_0$, $H\to\sigma(\mathcal{K}(H))$ is called the $\sigma$-dual of
$\mathcal{K}$. For a $t$-quasidivisible arc with divisor $q$, we consider the $\sigma$-dual arc
obtained for $\mathcal{K}^{\sigma}(H)=n+t-\mathcal{K}(H)\pmod{q}$.
It turns out that with this $\sigma$ the $\sigma$-dual to a $t$-quasidivisible arc $\mathcal{K}$ is 
a strong $(t\mod{q})$-arc. Moreover,
if $\mathcal{K}^{\sigma}$ contains a hyperplane in its support then 
$\mathcal{K}$ is extendable \cite{LR13,LRS16}.

There exist several straightforward constructions of $(t\mod{q})$-arcs \cite{LR13,LR19,LRS16}.
The first is the so-called sum-of-arcs construction.

\begin{theorem}
\label{thm:sum-of-arcs}	
  Let $\mathcal{K}$ and $\mathcal{K}'$ be a $(t_1 \mod q)$- and a $(t_2 \mod q)$-arc in $\PG(r,q)$, 
  respectively. Then $\mathcal{K}+\mathcal{K}'$ is a $(t \mod q)$-arc with 
$t \equiv t_1 + t_2 \pmod q$. Similarly, $\alpha\mathcal{K}$, where $\alpha\in\{ 0,\dots,\dots, p-1\}$ 
and $p$ is the characteristic of $\F_q$, is a $(t \mod q)$-arc 
with $t\equiv \alpha t_1\pmod q$.
\end{theorem}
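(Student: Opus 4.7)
The plan is straightforward: unfold the definition of $\mathcal{K}(L)$ for a line $L$ as the sum $\sum_{P\in L}\mathcal{K}(P)$, and observe that this evaluation is linear in the arc, since an arc is itself a non-negative integer-valued function on the point set $\mathcal{P}$. Both pointwise addition of arcs and pointwise scaling by a non-negative integer commute with this summation over the points of any fixed subspace.

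First I would handle the sum claim: for every line $L$,
\[
(\mathcal{K}+\mathcal{K}')(L) \;=\; \sum_{P\in L}\bigl(\mathcal{K}(P)+\mathcal{K}'(P)\bigr) \;=\; \mathcal{K}(L)+\mathcal{K}'(L) \;\equiv\; t_1+t_2 \pmod{q},
\]
so $\mathcal{K}+\mathcal{K}'$ is a $(t\bmod q)$-arc with $t\equiv t_1+t_2\pmod q$. The scalar case is the same one-liner:
\[
(\alpha\mathcal{K})(L) \;=\; \alpha\,\mathcal{K}(L) \;\equiv\; \alpha t_1 \pmod{q}.
\]

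There is no genuine obstacle here; the statement is essentially a reading of the definitions, and I expect the whole proof to fit in two short displayed lines. The only subtlety worth flagging in a remark is the role of the restriction $\alpha\in\{0,\ldots,p-1\}$: it is not required to make the congruence argument work, since scaling by any non-negative integer preserves the $(t\bmod q)$-condition. Rather, it singles out a set of essentially distinct scalars coming from the prime subfield $\F_p\subseteq\F_q$, paralleling the action of $\F_p$ on codewords in the associated linear code, and corresponds to the natural identification of arcs that differ by adding $q$-multiples to individual point multiplicities.
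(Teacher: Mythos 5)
Your proof is correct; the paper itself states this theorem without proof (citing \cite{LR13,LR19,LRS16}), and your argument — linearity of the evaluation $\mathcal{K}(L)=\sum_{P\in L}\mathcal{K}(P)$ in the arc — is exactly the standard definitional verification intended there. Your closing remark about the role of $\alpha\in\{0,\dots,p-1\}$ is also consistent with the paper's convention of reducing point multiplicities modulo $q$.
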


For the special case of $t=0$, and $q=p$ we have that the sum of two $(0\mod{p})$-arcs and the scalar multiple of a 
$(0\mod{p})$-arc are again $(0\mod{p})$-arcs. Hence the set of all $(0\mod{p})$-arcs
is a vector space over $\mathbb{F}_p$, cf.~\cite{LR19}.

The second construction is the so-called \emph{lifting construction}, see \cite[p. 230]{LR19}.
\begin{theorem}
\label{thm:lifting-construction}
Let $\mathcal{K}_0$ be a (strong) $(t\mod q)$-arc in a projective $s$-space $\Sigma$ of $\PG(r, q)$, where $1 \le s < r$. For a fixed projective $(r-s-1)$-space $\Gamma$ of $\PG(r, q)$, disjoint from $\Sigma$, let $\mathcal{K}$ be the arc in $\PG(r, q)$ defined as follows: 
\begin{itemize}
\item for each point $P$ of $\Gamma$, set $\mathcal{K}(P)=t$;
\item for each point $Q \in \PG(r, q) \setminus \Gamma$, set $\mathcal{K}(Q)=\mathcal{K}_0(R)$, where $R=\langle \Gamma, Q \rangle \cap \Sigma$. 
\end{itemize}  
Then $\mathcal{K}$ is a (strong) $(t\mod q)$-arc in $\PG(r,q)$ of cardinality $q^{r-s} \cdot |\mathcal{K}_0| + t \frac{q^{r-s}-1}{q-1}$. 
\end{theorem}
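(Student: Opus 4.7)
The plan is to verify the three assertions (the mod-$q$ condition on lines, the cardinality, and the strongness) by exploiting the basic dimension formula for projective subspaces together with the disjointness $\Gamma\cap\Sigma=\emptyset$. Note first that $\dim\Gamma+\dim\Sigma=(r-s-1)+s=r-1$ and $\Gamma\cap\Sigma=\emptyset$, so $\langle\Gamma,\Sigma\rangle=\PG(r,q)$. More importantly, for any point $Q\notin\Gamma$ the subspace $\langle\Gamma,Q\rangle$ has dimension $r-s$, hence meets the $s$-space $\Sigma$ in a single point, so the definition of $\mathcal{K}(Q)$ is unambiguous.

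To check that $\mathcal{K}$ is a $(t\bmod q)$-arc, I would compute $\mathcal{K}(L)\pmod q$ for an arbitrary line $L$ by splitting on $\dim(L\cap\Gamma)$. If $L\subseteq\Gamma$, every point carries multiplicity $t$, so $\mathcal{K}(L)=(q+1)t\equiv t\pmod q$. If $L\cap\Gamma$ is a single point $P$, then $\langle\Gamma,L\rangle=\langle\Gamma,Q\rangle$ for every $Q\in L\setminus\{P\}$ (both are $(r-s)$-spaces containing the same data), so all $q$ points of $L\setminus\Gamma$ project to one common point $R\in\Sigma$, giving $\mathcal{K}(L)=t+q\,\mathcal{K}_0(R)\equiv t\pmod q$. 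Finally, if $L\cap\Gamma=\emptyset$, then $\langle\Gamma,L\rangle$ has dimension $r-s+1$ and cuts $\Sigma$ in a line $L'$; the projection from $\Gamma$ restricts to a bijection $L\to L'$ with $\mathcal{K}(Q)=\mathcal{K}_0(R(Q))$ for each $Q\in L$, whence
\[
\mathcal{K}(L)=\sum_{Q\in L}\mathcal{K}_0(R(Q))=\sum_{R\in L'}\mathcal{K}_0(R)=\mathcal{K}_0(L')\equiv t\pmod q.
\]
The only slightly delicate point in this step is checking in the second case that the projection from $\Gamma$ really collapses $L\setminus\Gamma$ to a single point; this is where the dimension bookkeeping is most important.

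For the cardinality, I would partition $\PG(r,q)$ into $\Gamma$ and the fibres of the projection. The $(r-s-1)$-space $\Gamma$ contains $(q^{r-s}-1)/(q-1)$ points of weight $t$. For each $R\in\Sigma$ the preimage $\langle\Gamma,R\rangle\setminus\Gamma$ is an $(r-s)$-space minus an $(r-s-1)$-space, of size $\bigl(q^{r-s+1}-1\bigr)/(q-1)-\bigl(q^{r-s}-1\bigr)/(q-1)=q^{r-s}$, and every such point carries multiplicity $\mathcal{K}_0(R)$. Summing gives the claimed total $q^{r-s}\,\lvert\mathcal{K}_0\rvert+t(q^{r-s}-1)/(q-1)$.

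Strongness is then immediate: points of $\Gamma$ have multiplicity exactly $t$, while points outside $\Gamma$ inherit a value $\mathcal{K}_0(R)\le t$ from the assumed strongness of $\mathcal{K}_0$, so the maximum point multiplicity of $\mathcal{K}$ is at most $t$ as required. The main obstacle in the whole argument is merely the correct bookkeeping of dimensions in the three line cases; once those are set up, the mod-$q$ identity, the counting, and the multiplicity bound all drop out cleanly.
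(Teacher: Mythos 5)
Your proof is correct. Note that the paper itself does not prove Theorem~\ref{thm:lifting-construction}: it is quoted as a known result with a pointer to \cite[p.~230]{LR19}, so there is no in-paper argument to compare against. Your write-up is the standard (and essentially the only natural) argument: the case split on $\dim(L\cap\Gamma)\in\{1,0,-1\}$ handles the mod-$q$ condition, the fibration of $\PG(r,q)\setminus\Gamma$ by the sets $\langle\Gamma,R\rangle\setminus\Gamma$, $R\in\Sigma$, gives the cardinality, and strongness is immediate. The dimension bookkeeping is right throughout ($\langle\Gamma,Q\rangle$ meets $\Sigma$ in exactly a point because $\langle\Gamma,\Sigma\rangle=\PG(r,q)$ forces the intersection dimension to be exactly $0$, not just at least $0$). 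The one step you assert without justification is the injectivity of the projection $L\to L'$ in the third case; it is worth a sentence: if $Q_1\ne Q_2$ on $L$ had the same image $R$, then $\langle\Gamma,Q_1\rangle=\langle\Gamma,Q_2\rangle=\langle\Gamma,R\rangle$ by comparing dimensions, so $L\subseteq\langle\Gamma,R\rangle$ and $L'=\langle\Gamma,L\rangle\cap\Sigma$ would reduce to the single point $R$, contradicting that $L'$ is a line. With that sentence added the proof is complete.
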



Arcs obtained by the lifting construction are called \emph{lifted arcs}. If $\Sigma$ is a point, then we speak of a \emph{lifting point}. The iterative 
application of the lifting constructions gives the more general version stated above. In the other direction, in \cite[Lemma 1]{LR19} i was shown that the 
set of all lifting points forms a subspace.

The classification of strong $(t\mod{q})$ arcs in $\PG(2,q)$ is equivalent to that 
of certain plane blocking sets \cite{LR16}.

\begin{theorem}
	\label{thm:blocking-set-constr}
	A strong $(t\mod q)$-arc $\mathcal{K}$ in $\PG(2,q)$ of cardinality $mq+t$ exists if and only if there exists an 
	$((m-t)q+m,\ge m-t)$-blocking set $\mathcal{B}$ 
	with line multiplicities contained in the set $\{m-t,m-t+1,\dots,m\}$.
\end{theorem}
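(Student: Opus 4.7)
The plan is to exhibit an explicit correspondence between strong $(t\mod q)$-arcs $\mathcal{K}$ in $\PG(2,q)$ and blocking sets $\mathcal{B}$ in the dual plane via the $\sigma$-dual construction sketched earlier in the paper. Because $\PG(2,q)$ is self-dual, $\mathcal{B}$ can then be regarded as a blocking set in $\PG(2,q)$ itself. The two key ingredients are the $(t\mod q)$-condition (which lets us divide each line multiplicity, shifted by $t$, by $q$) and the standard point-line double counting
\[
\sum_{L\ni P}\mathcal{K}(L)\;=\;\lvert\mathcal{K}\rvert+q\,\mathcal{K}(P).
\]

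For the forward direction, I set $\sigma(x)=(x-t)/q$ and define $\mathcal{B}:=\mathcal{K}^\sigma$; concretely, for every line $L$ of $\PG(2,q)$, interpreted as a point $L^\ast$ of the dual plane, put $\mathcal{B}(L^\ast)=(\mathcal{K}(L)-t)/q$. This is a nonnegative integer because $\mathcal{K}$ is a $(t\mod q)$-arc, and it lies in $\{0,1,\dots,m\}$ because no line can have more than $\lvert\mathcal{K}\rvert=mq+t$ points. Summing $\mathcal{B}(L^\ast)$ over all lines and using $\sum_L\mathcal{K}(L)=(q+1)(mq+t)$, I obtain $\lvert\mathcal{B}\rvert=(m-t)q+m$. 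For the line multiplicities in $\mathcal{B}$: a line of the dual plane is a point $P$ of $\PG(2,q)$, and its multiplicity in $\mathcal{B}$ is $\sum_{L\ni P}(\mathcal{K}(L)-t)/q=m-t+\mathcal{K}(P)$, by the double-counting formula. Since $\mathcal{K}$ is strong, $\mathcal{K}(P)\in\{0,1,\dots,t\}$, so every line meets $\mathcal{B}$ in a number of points in $\{m-t,m-t+1,\dots,m\}$, proving both the blocking condition and the spectrum bound.

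The reverse direction just inverts this map. Given a blocking set $\mathcal{B}$ in $\PG(2,q)$ with the stated parameters, view it as living in the dual plane and define $\mathcal{K}(P):=\mathcal{B}(P^\ast)-(m-t)$, where $P^\ast$ is the line of the dual plane corresponding to $P$. The hypothesis that every line meets $\mathcal{B}$ in at least $m-t$ points guarantees $\mathcal{K}(P)\ge 0$, and the spectrum bound $\mathcal{B}(P^\ast)\le m$ yields $\mathcal{K}(P)\le t$, so $\mathcal{K}$ is strong. Reversing the sum computations, the cardinality becomes $mq+t$ and every line $L$ of $\PG(2,q)$ satisfies $\mathcal{K}(L)=q\,\mathcal{B}(L^\ast)+t\equiv t\pmod q$, which is exactly the $(t\mod q)$-condition.

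There is no real obstacle here; the proof is bookkeeping driven by the double-counting identity above, and the only small thing to be careful about is the normalization (why subtracting $t$ and dividing by $q$ in the forward direction matches adding $m-t$ to values in $[0,t]$ in the reverse direction). I would present the computation of $\lvert\mathcal{B}\rvert$ and of the line multiplicities of $\mathcal{B}$ as two short displayed equalities and then state that the reverse direction is obtained by retracing the same identities, since the map $\mathcal{K}\mapsto\mathcal{B}$ is manifestly bijective between the two classes of objects.
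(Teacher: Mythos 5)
Your proof is correct: the paper itself states this theorem without proof (it is quoted from \cite{LR16}), and your argument is precisely the standard $\sigma$-dual/double-counting correspondence that the paper's preliminaries sketch, with all the arithmetic ($\lvert\mathcal{B}\rvert=(m-t)q+m$, line multiplicity $m-t+\mathcal{K}(P)$, and the inverse normalization) checking out. The only point worth making explicit when writing it up is that $\mathcal{B}$ is a \emph{multiset} of lines (the values $(\mathcal{K}(L)-t)/q$ may exceed $1$), which is consistent with the paper's convention that blocking sets, like arcs, carry multiplicities.
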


The condition that the multiplicity of each point is at most $t$ turns out to be very strong.
For $t=0$, we have that the only strong $(0\mod{q})$-arc is the trivial zero-arc. 
For $t=1$ the strong $(1\mod{q})$-arcs are the hyperplanes.
For $t=2$ all strong $(2\mod{q})$ arcs in $\PG(r,q)$, for $r\ge3, q\ge5$,
turn out to be lifted \cite{LR19}. In $\PG(2,q)$,  
all $(2\mod{q})$-arcs are also known (cf.~\cite[Lemma 3.7]{KLR22}).
Apart from one sporadic example, all such arcs are again lifted.
It was conjectured in \cite{LR16} that all strong $(3\mod{5})$-arcs in 
$\PG(r,5)$, $r\ge3$, are lifted.
The  computer classification reported in \cite{KLR22} shows that this conjecture is wrong:
there exist $(3\mod{5})$-arcs of respective sizes 128, 143, and 168 that are not lifted.
In the next sections we give a geometric (computer-free) description of these arcs 
and define an infinite class of strong $(t\mod{q})$-arcs in $\PG(r,q)$, $r\ge3$,
that are not lifted.

 \section{The arc of size 128}
\label{sec:128}
 
 We shall need the classification of all strong $(3\mod{5})$-arcs in $\PG(2,5)$ of sizes 18, 23, 28 and 33.
 It is obtained easily from Theorem~\ref{thm:blocking-set-constr} and can be found in \cite{KLR22,LR19}.  
 
\begin{theorem}
	\label{thm:plane-arcs}
Let $\mathcal{K}$ be a strong $(3\mod5)$-arc in $\PG(2,5)$.
Let $\lambda_i$, $i=0,1,2,3$, denote the number of $i$ points of
$\mathcal{K}$.

\begin{enumerate}[(a)]
\item If $|\mathcal{K}|=18$ then $\mathcal{K}$ is the sum of three lines.
\item If $|\mathcal{K}|=23$ then it has $\lambda_3=3,\lambda_2=4,\lambda_1=6$.
The four 2-points form a quadrangle, the three 3-points are the 
diagonal points of the quadrangle, and the 1-points are the intersections of the diagonals with the sides
of the quadrangle.
\item If $|\mathcal{K}|=28$ then it has $\lambda_3=6,\lambda_1=10$. The 3-points form an oval, and the 
1-points are the internal points to this oval.
\item There exist ten non-isomorphic arcs with $|\mathcal{K}|=33$. These are:
\begin{enumerate}[(i)]
\item the duals of the complements of the seven $(10,3)$-arcs in $\PG(2,5)$ (cf. \cite{L96});
\item the dual of the multiset which is complement of the $(11,3)$-arc with four external lines
plus one point which is not on a 6-line ($\lambda_3=6,\lambda_2=5,\lambda_1=5$);
\item the dual of a blocking set in which one double point forms an oval with five of the 0-points;
the tangent to the oval in the 2-point is a 3-line ($\lambda_3=6,\lambda_2=5,\lambda_1=5$); 
\item the modulo 5 sum of three non-concurrent lines: two of them are lines of 3-points and one 
is a line of 2-points ($\lambda_3=8,\lambda_2=4,\lambda_1=1$).
\end{enumerate}
\end{enumerate}
\end{theorem}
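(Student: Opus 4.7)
The natural strategy is to combine Theorem~\ref{thm:blocking-set-constr} with the standard arc-counting identities in $\PG(2,5)$: letting $\lambda_i$ count points of multiplicity $i\in\{0,1,2,3\}$ and $a_j$ count lines of weight $j\equiv 3\pmod 5$ with $3\le j\le 18$, one has $\sum_i\lambda_i = 31$, $\sum_i i\lambda_i = |\mathcal{K}|$, $\sum_j a_j = 31$, $\sum_j j\,a_j = 6|\mathcal{K}|$, and $\sum_j j^2 a_j = 5\sum_i i^2\lambda_i + |\mathcal{K}|^2$. For each $|\mathcal{K}|\in\{18,23,28,33\}$ the plan is first to determine the admissible point/line spectra, and then to identify each one geometrically.

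In parts~(a)--(c) the equations are rigid enough to essentially pin down the configuration. For $|\mathcal{K}|=18$ the spectra force the arc to be a sum $L_1+L_2+L_3$ of three (possibly coincident) lines. For $|\mathcal{K}|=23$ one obtains $(\lambda_3,\lambda_2,\lambda_1)=(3,4,6)$, and a short geometric argument on how the 2- and 3-points can share lines of the admissible weights recovers the quadrangle-plus-diagonal-triangle picture. For $|\mathcal{K}|=28$ the equations force $(\lambda_3,\lambda_2,\lambda_1)=(6,0,10)$ and $(a_3,a_8)=(16,15)$; since no line of weight $13$ or $18$ occurs, every line meets the six 3-points in at most two points, so these form an oval with $6$ tangents, $15$ bisecants, and $10$ external lines, and a second incidence count forces the tangents to carry weight~$3$ and the ten 1-points to be exactly the oval's ten internal points.

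Part~(d) is the principal obstacle, since for $|\mathcal{K}|=33$ the counting equations admit several compatible spectra. The plan is to enumerate these and treat each separately. The seven arcs listed in (d)(i) arise by recognising their duals as complements of $(10,3)$-arcs in $\PG(2,5)$ and invoking the classification from \cite{L96}. The three sporadic arcs in (d)(ii)--(iv) require individual geometric identification, respectively from the complement of an $(11,3)$-arc with four external lines together with an additional point off all its $6$-lines, from a blocking set whose only $2$-point together with five $0$-points forms an oval whose tangent at that $2$-point is a $3$-line, and from the modulo~$5$ sum of three non-concurrent lines with prescribed multiplicities. The delicate step, and the true obstacle in~(d), is verifying that each admissible spectrum yields a unique collineation class; this relies on the known classifications of $(10,3)$- and $(11,3)$-arcs in $\PG(2,5)$ together with short case analyses of how the 2- and 3-points are distributed.
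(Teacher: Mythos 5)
The paper does not actually prove this statement: it is quoted as known, with the remark that it ``is obtained easily from Theorem~\ref{thm:blocking-set-constr} and can be found in \cite{KLR22,LR19}.'' Your plan points in the same direction --- standard moment identities in $\PG(2,5)$ plus the blocking-set duality of Theorem~\ref{thm:blocking-set-constr} plus the known classifications of $(10,3)$- and $(11,3)$-arcs --- so the overall route is the right one. Note that the cleanest way to organize the whole argument is to push everything through Theorem~\ref{thm:blocking-set-constr} from the start: an arc of size $5m+3$ corresponds to a $((m-3)5+m,\ge m-3)$-blocking multiset with line multiplicities in $\{m-3,\dots,m\}$, so case (a) is literally the classification of $3$-point multisets in the dual plane (hence sums of three lines), and case (d) is the classification of $21$-point blocking multisets all of whose lines carry $3$ to $6$ points, whose complements (in the set case) are exactly the $(10,3)$-arcs of \cite{L96}.

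The genuine gaps are two. First, in parts (a)--(c) you repeatedly assert that ``the equations force'' the spectrum, but the three moment identities alone do not: for $|\mathcal{K}|=28$ they do not by themselves exclude $13$- and $18$-lines nor give $\lambda_2=0$ (you have more unknowns than independent equations); one needs either the duality with $(15,\ge2)$-blocking multisets or a local argument on the pencil of lines through a point of given multiplicity. Second, and more seriously, part (d) --- which you correctly identify as the principal obstacle --- is not actually carried out. Establishing that the list of ten arcs is \emph{complete} requires (i) enumerating all admissible $(\lambda_i)$/$(a_j)$ pairs for $|\mathcal{K}|=33$, (ii) showing that the set-type blocking sets are precisely the complements of the seven $(10,3)$-arcs, and (iii) showing that the proper multisets lead only to the three configurations (ii)--(iv) and to nothing else; this last exclusion step is the entire content of the result and is exactly what \cite{KLR22,LR19} supply. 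As written, your argument would prove existence of the ten examples but not uniqueness of the list, so either the case analysis must be done in full or the completeness claim must be explicitly delegated to the cited classification, as the paper itself does.
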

 
Let us note that one of the strong $(3\mod5)$-arcs in case $(d(i))$ is obtained by
taking as 3-points the points of an oval and as 1-points the external points to the oval.

Consider a $(3\mod5)$-arc $\mathcal{K}$ in $\PG(3,5)$ which is of multiplicity 128.
Let $\varphi$ be a projection from an arbitrary 0-point $P$ to a plane $\pi$ not incident with $P$:
\begin{equation}
\label{eq:project}
\varphi\colon
\left\{\begin{array}{lll}
\mathcal{P}\setminus\{P\}\ & \rightarrow\ & \pi \\
Q & \rightarrow & \pi\cap\langle P,Q\rangle .
\end{array}\right.
\end{equation}
Here $\mathcal{P}$ is again the set of points of $\PG(3,5)$.
Note that $\varphi$ maps the lines through $P$ into
points from $\pi$, and the planes through $P$ into lines in $\pi$.
For every set of points $\mathcal{F}\subset\pi$, define the induced
arc $\mathcal{K}^{\varphi}$ by
\[\mathcal{K}^{\varphi}(\mathcal{F})\,=
\,\sum_{\varphi(P)\in\mathcal{F}} \mathcal{K}(P).\]
It is clear that $P$ is incident with 3- and 8-lines, only. If there exists a 13-line
$L$ through $P$ then all planes through $L$ have multiplicity at least 33 (Theorem~\ref{thm:plane-arcs})
and $|\mathcal{K}|\ge6\cdot33-5\cdot13=133$, a contradiction.

An 8-line through $P$ is either of type (3,3,1,1,0,0) (type ($\alpha$),
or of type (3,2,2,1,0,0) (type ($\beta$)). Other types for an 8-line are
impossible by the same counting argument as above:
a plane through such a line has to be of multiplicity at least 33
(18-planes are impossible since $P$ is a 0-point), 
and we get a contradiction by the same counting argument as above.  
A 3-line through $P$ is  of type $(\gamma_1)$ (3,0,0,0,0,0),
$(\gamma_2)$ (2,1,0,0,0,0), or $(\gamma_3)$ (1,1,1,0,0,0).
A point in the projection plane is said to be of type
($\alpha$), ($\beta$), or ($\gamma_i$) if it is the image of a line of the same type.
Let us note that type $(\alpha)$ and $(\beta)$ are the same as types $(B_2)$ and $(B_3)$
from \cite{KLR22}; similarly type $(\gamma_i)$ 
coincides with type $(A_i)$, $i=1,2,3$.

By Theorem~\ref{thm:plane-arcs}, if a line in the projection plane
has one 8-point then it contains:

- one point of type ($\alpha$), one point of type ($\gamma_1$), and
four points of type ($\gamma_2$), or else

- one point of type ($\beta$), two points of type ($\gamma_1$), two points of type
($\gamma_2$) and one point of type ($\gamma_3$).

We are going to prove that if $\mathcal{K}$ is a strong $(3\mod5)$-arc in $\PG(3,5)$
of cardinality 128
then the induced arc $\mathcal{K}^{\varphi}$ in $\PG(2,5)$
is unique (up to isomorphism).
It consists of seven 8-points and 24 3-points.
Three of the 8-points are of type $(\alpha)$, and four are of type ($\beta$). 
The 3-points are: six of type ($\gamma_1$), twelve of type ($\gamma_2$), and
six of type ($\gamma_3$). The points of type ($\beta$) form a quadrangle, and 
the points of type ($\alpha$) are the diagonal points. The intersections of the 
lines defined by the diagonal points  with the sides of the quadrangle are points of type
($\gamma_3$); the six points on the lines  defined by the diagonal points that
are not on sides of the quadrangle are of type ($\gamma_1$); all the remaining 3-points are
of type ($\gamma_2$). The induced arc $\mathcal{K}^{\varphi}$ is presented on the picture below.

\begin{center}
	\begin{tikzpicture}[line width=1pt, scale=0.5]
	draw[gray] (0,0)
	\draw (0,0)--(0,8);
	\draw (-6,0)--(12,0);
	\draw (-6,0)--(2,5.33)--(0,8)--(-2,5.33)--(6,0)--(0,8)--(-6,0);
	\draw (-6,0)--(1,6.66);
	\draw (6,0)--(-1,6.66);
	\draw (-6,0)--(3.33,3.55);
	\draw (-6,0)--(4.66,1.77);
	\draw (6,0)--(-3.33,3.55);
	\draw (6,0)--(-4.66,1.77);
	\draw (-6,0)--(5.3,4.30);
	\draw (-6,0)--(9,2.5);
	
	\draw[black] (-2,5.33)--(0,5.71)--(2,5.33)
	.. controls (8,3.5) and (8,3.5) .. (12,0);
	
	\draw[black] (-6,0) circle (2mm) [fill=black];
	\draw[blue!20] (0,0) circle (2mm) [fill=blue!20];
	\draw[black] (6,0) circle (2mm) [fill=black];
	\draw[black!50] (0,8) circle (2mm) [fill=black!50];
	\draw[black!50] (2,5.33) circle (2mm) [fill=black!50];
	\draw[black!50] (-2,5.33) circle (2mm) [fill=black!50];
	\draw[black!50] (0,4) circle (2mm) [fill=black!50];	
	
	\draw[black] (-6,0) circle (3mm);
	\draw[black] (6,0) circle (3mm);
	\draw[black!50] (0,8) circle (3mm);
	\draw[black!50] (2,5.33) circle (3mm);
	\draw[black!50] (-2,5.33) circle (3mm);
	\draw[black!50] (0,4) circle (3mm);	
	
	\draw[black!60] (-3,0) circle (2mm) [fill=black!60];
	\draw[black!60] (3,0) circle (2mm) [fill=black!60];
	\draw[blue!20] (12,0) circle (2mm) [fill=blue!20];
	\draw[black!30] (-4.66,1.77) circle (2mm) [fill=black!30];
	\draw[black!30] (-3.33,3.55) circle (2mm) [fill=black!30];
	\draw[blue!20] (-1,6.66) circle (2mm) [fill=blue!20];
	\draw[black!30] (4.66,1.77) circle (2mm) [fill=black!30];
	\draw[black!30] (3.33,3.55) circle (2mm) [fill=black!30];
	\draw[blue!20] (1,6.66) circle (2mm) [fill=blue!20];
	\draw[black] (0,5.71) circle (2mm) [fill=black];	
	\draw[black] (0,5.71) circle (3mm);
	\draw[blue!20] (-1,4.66) circle (2mm) [fill=blue!20];
	\draw[blue!20] (1,4.66) circle (2mm) [fill=blue!20];		
	\draw[black!30] (0,1) circle (2mm) [fill=black!30];
	\draw[black!30] (0,2.3) circle (2mm) [fill=black!30];		
	\draw[black!30] (-1.6,2.9) circle (2mm) [fill=black!30];
	\draw[black!30] (-3.6,1.6) circle (2mm) [fill=black!30];		
	\draw[black!60] (-2.6,3.26) circle (2mm) [fill=black!60];
	\draw[black!60] (-4.2,1.65) circle (2mm) [fill=black!60];		
	
	\draw[black!30] (1.6,2.9) circle (2mm) [fill=black!30];
	\draw[black!30] (3.6,1.6) circle (2mm) [fill=black!30];		
	\draw[black!60] (2.6,3.26) circle (2mm) [fill=black!60];
	\draw[black!60] (4.2,1.65) circle (2mm) [fill=black!60];		
	\draw[black!30] (5.3,4.3) circle (2mm) [fill=black!30];
	\draw[black!30] (9,2.5) circle (2mm) [fill=black!30];		
	
	\draw[black] (6,12) circle (2mm) [fill=black];	
	\draw[black] (6,12) circle (3mm);
	\draw[black!50] (6,11) circle (2mm) [fill=black!50];	
	\draw[black!50] (6,11) circle (3mm);
	\draw[black!60] (6,10) circle (2mm) [fill=black!60];	
	\draw[black!30] (6,9) circle (2mm) [fill=black!30];	
	\draw[blue!20] (6,8) circle (2mm) [fill=blue!20];	
	\draw (9,12) node{\small{$(3,3,1,1,0,0)$}};
	\draw (9,11) node{\small{$(3,2,2,1,0,0)$}};
	\draw (9,10) node{\small{$(3,0,0,0,0,0)$}};
	\draw (9,9) node{\small{$(2,1,0,0,0,0)$}};
	\draw (9,8) node{\small{$(1,1,1,0,0,0)$}};

	\end{tikzpicture}
\end{center}

\begin{lemma}
\label{lma:projection-from-0-point}
Let $\mathcal{K}$ be a strong $(3\mod5)$-arc in $\PG(3,5)$ of cardinality
128. Let $\varphi$ be the projection from an arbitrary 0-point
in $\PG(3,5)$ into a plane disjoint from that point. 
Then the arc $\mathcal{K}^{\varphi}$ is unique
up to isomorphism and has the structure described above. 
\end{lemma}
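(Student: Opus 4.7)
The plan is to combine a cardinality computation with a local analysis of plane arcs through the zero point $P$, and then use double-counting to recover the quadrangle-and-diagonal-triangle structure on $\pi$.

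First, the bookkeeping: since $\mathcal{K}(P)=0$, we have $\mathcal{K}^\varphi(R)=\mathcal{K}(\langle P,R\rangle)$, so $|\mathcal{K}^\varphi|=128$, and the fact (established before the lemma) that every line through $P$ has multiplicity $3$ or $8$ implies that every point of $\pi$ has multiplicity $3$ or $8$. Solving $A_3+A_8=31$ and $3A_3+8A_8=128$ gives seven $8$-points and twenty-four $3$-points, partitioned into types $(\alpha),(\beta)$ and $(\gamma_1),(\gamma_2),(\gamma_3)$ inherited from the types of their preimage lines through $P$.

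Next, a line $\ell\subset\pi$ lifts to a plane $\Pi_\ell$ through $P$ with $\mathcal{K}^\varphi(\ell)=\mathcal{K}(\Pi_\ell)$. Since each of the six lines through $P$ in $\Pi_\ell$ has multiplicity $3$ or $8$, we get $\mathcal{K}(\Pi_\ell)\in\{18,23,28,33\}$ and $\ell$ carries $0,1,2$ or $3$ eight-points accordingly. Theorem~\ref{thm:plane-arcs} determines each plane arc around a zero-point: in a $23$-plane the unique $8$-line through $P$ is either a diagonal of the arc's quadrangle (producing the $(\alpha)/(\gamma_1)/\text{four-}(\gamma_2)$ structure stated in the excerpt) or a side (producing the $(\beta)/\text{two-}(\gamma_1)/\text{two-}(\gamma_2)/(\gamma_3)$ structure); in a $28$-plane the two $8$-lines are secants of the oval, both of type $(\alpha)$, and the four three-lines split as two $(\gamma_1)$-tangents and two $(\gamma_3)$-external lines; in an $18$-plane all six lines through $P$ are three-lines whose types depend on whether the three summand lines are concurrent or in general position; and in a $33$-plane there are three $8$-lines and three three-lines through $P$. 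The decisive compatibility is that the $8$-lines of a $28$-arc all have type $(3,3,1,1,0,0)$, so no $\beta$-point of $\pi$ lies on a $28$-line; the identity $\sum_{\Pi\supset L}\mathcal{K}(\Pi)=6\cdot 8+(128-8)=168$ over the six planes through a $\beta$-type $8$-line $L$ then forces $(m_{23},m_{33})=(3,3)$, so every $\beta$-point of $\pi$ lies on exactly three $23$-lines and three $33$-lines.

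Finally, a global double-count completes the identification. Summing line multiplicities through a point of $\pi$ gives $168$ at an $8$-point and $143$ at a $3$-point, and summing plane multiplicities through $P$ gives $6\cdot 128=768$. Together with the local structures of $23$-, $28$- and $33$-lines above, these identities force three $\alpha$-points, four $\beta$-points, three $28$-lines and six $33$-lines in $\pi$ (and then $l_{23}=18$, $l_{18}=4$). Three collinear $\beta$-points would produce a $33$-line whose three lifted $8$-lines through $P$ were all of type $(\beta)$, contradicting the $(3,0,3)$-distribution just derived; hence the four $\beta$-points are in general position. Each of the three $\alpha$-points then lies on exactly two of the six sides of the resulting quadrangle, so they are precisely its diagonal points, and the placements of the $\gamma_i$-points on the sides and diagonals follow immediately from the $23$- and $28$-plane descriptions matched with the $\pi$-incidences.

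The main obstacle is the analysis of the $33$-plane case: each of the ten non-isomorphic $33$-arcs in $\PG(2,5)$ has its own $8$-line spectrum, and one must verify that whenever $P$ is a zero-point of a $33$-arc the three $8$-lines through $P$ all have type $(3,3,1,1,0,0)$ or $(3,2,2,1,0,0)$ (ruling out, for example, the sum-of-three-lines $33$-arc (iv) whose $8$-lines through a generic zero-point have the inadmissible distribution $(3,3,2,0,0,0)$), and that the resulting type-splits across all $33$-planes through $P$ are consistent with the conjectured "two sides plus one diagonal" pattern. Once this rigidity is in hand, the rest of the proof is a clean chain of double-countings.
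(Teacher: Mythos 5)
Your setup (seven $8$-points and twenty-four $3$-points, lines of $\pi$ lifting to planes through $P$ with multiplicities in $\{18,23,28,33\}$ carrying $0,1,2,3$ eight-points) matches the paper, and your observation that a $\beta$-point lies on no $28$-line (since the $28$-arc has no $2$-points) and hence on exactly three $23$-lines and three $33$-lines is correct and clean. The gap is the claim that the double-counting identities then \emph{force} three $\alpha$-points, four $\beta$-points, $l_{28}=3$ and $l_{33}=6$. They do not. Writing $A$, $B$ for the numbers of $\alpha$- and $\beta$-points and $S=\sum_{\alpha\text{-points}} m_{33}$, the relation $m_{28}+2m_{33}=6$ at each $\alpha$-point gives $m_{23}=m_{33}$ there, and all of your counting identities collapse to $l_{23}=3l_{33}$, $l_{28}+3l_{33}=21$, $l_{28}=3A-S$, $3l_{33}=3B+S$ — which together reduce to $A+B=7$ and leave $A$, $B$, $S$ (hence $l_{28}$, $l_{33}$) undetermined. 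In particular nothing in the arithmetic excludes, say, seven $\alpha$-points with $B=0$, or configurations with four or more collinear $8$-points.

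What actually pins the configuration down in the paper is a case analysis of the collinearity pattern of the seven $8$-points (six, five, or four collinear; an oval of six; five in general position; two trisecants meeting in a $3$-point; etc.), each case killed by a \emph{positional} argument: a $23$-line through an $\alpha$-point must carry four points of type $(\gamma_2)$ and one of type $(\gamma_1)$, a $23$-line through a $\beta$-point must carry two of each of $(\gamma_1)$, $(\gamma_2)$ and one $(\gamma_3)$, while $28$-lines and lines joining two $\alpha$-points contain no $(\gamma_2)$-points at all; in each forbidden configuration some $23$-line cannot be supplied with enough $(\gamma_2)$-points. This interplay between the internal structure of the $23$-arc and the incidences in $\pi$ is the missing idea in your proposal; the counting alone, however it is combined with the local plane structures, cannot replace it. (Your argument against three collinear $\beta$-points is also not cogent as stated: a $33$-line through three $\beta$-points is simply one of the three $33$-lines each of them must lie on, so no contradiction with the $(3,0,3)$ distribution arises; ruling this out again requires comparing against the classified $33$-arcs or the $\gamma_2$-starvation argument.) Finally, note that the reduction of $8$-line types to $(\alpha)$ and $(\beta)$, which you list as the ``main obstacle,'' is established in the paper before the lemma and may be taken as given.
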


 \begin{proof}
 We have seen that 0-points are incident only with lines of multiplicity 3 and 8. 
 Hence $\mathcal{K}^{\varphi}$ has seven 8-points and twenty-four 3-points.
 Assume that six of the 8-points are collinear. Clearly, every 8-point is of type ($\alpha$)
 since it is on a line containing two 8-points (and hence the image of a 28-plane).
 Every other point in the projection plane is also on a line containing
 two 8-points; hence all 3- points in the plane are of type
 ($\gamma_1$) or ($\gamma_3$). But now a line with one 8-point
 cannot have points of type ($\gamma_2$), which is a contradiction
 with the structure of the $(3\mod5)$-arc of size 23.
 
 Assume that five of the 8-points are collinear. Let $L$ be the line that
 contains them. If the two 8-points off $L$ define a line meeting $L$ in a 3-point, the proof
 is completed as above. Otherwise, the points off $L$ are on four lines containing
 two 8-points. Now it is easily checked that
 there exists a line with exactly one 8-point which has at least four 3-points that are not
 of type ($\gamma_2$). This is a contradiction with the structure of the $(3\mod5)$-arc
 of size 23.
 
 A similar argument rules out the possibility of four collinear 8-points. 
 In all cases these have to be points of type $(\alpha)$. So are the remaining
 three 8-points. Now for all possible configurations of these seven points we get a 23-line 
 without enough points of type $(\gamma_2)$.
 
 We are going to consider in full detail the case when at most three 8-points
 in the projection plane are collinear.
 Assume there exists an oval of 8-points, $X_1,\ldots,X_6$, say, and let $Y$ be the seventh
 8-point. All 8-points are of type ($\alpha$) and let $YX_1X_2$ be a 
 secant to the oval through $Y$. The lines $X_1X_j$, $j=3,4,5,6$, are images of planes without 2-points.
 Now an external line to the oval through $Y$ is a 23-line and
 has at most one point of type ($\gamma_2$), a contradiction. In a similar way, we rule out the case
where there exist five 8-points no three of which are collinear. We have to consider the different
possibilities for the line defined by the remaining two 8-points: secant, tangent, or
external line to the oval formed by the former five points and one additional point
which has to be a 3-point.

We have shown so far that there are at most three collinear 8-points.
It is also clear that there exist at least two lines that contain three 8-points.
We consider the case where these  lines meet in a 3-point.
Denote the 8-points by $X_i, Y_i$, $i=1,2,3$, and $Z$. We also assume that
$X_1,X_2,X_3$ are collinear and so are $Y_1,Y_2,Y_3$. Each of the lines $ZX_i$, $i=1,2,3$, also
contains three 8-points; otherwise there exist five 8-points no three of which are collinear.
Without loss of generality, the triples $Z,X_i,Y_i$, $i=1,2,3$, are collinear.
Now it is clear that all the points $X_i, Y_i$ are of type ($\alpha$). Moreover, neither of the 
lines $X_iY_j$, $i\ne j$, has 3-points of type ($\gamma_2$). Now if we consider a line through $X_3$ that does not
have other 8-points it should contain four points of type $(\gamma_2)$. 
On the other hand, it intersects $X_1Y_2$ and $X_1Y_3$ in points which are not of this type
which gives a contradiction.

Now we are left with only one possibility for the 8-points 
subject to the conditions: (i) each line contains at most three 8-points, (ii) 
lines incident with three 8-points meet in an 8-point, (iii) every 5-tuple of 8-points
contains a collinear triple. The 8-points are the vertices of a quadrangle plus the 
three diagonal points. Furthermore,
the diagonal points have to be of type ($\alpha$) while the vertices of the quadrangle are forced to be 
of type ($\beta$). This is due to the fact that through each of the vertices of the quadrangle there is
a line with a single 8-point which meets the three lines defined by the diagonal points of type $(\alpha)$
in three different 3-points that are not of type ($\gamma_2$) (since a 28-plane does not have 2-points). 
Thus we get the picture below.
\vfill

\begin{center}
\begin{tikzpicture}[line width=1pt, scale=0.5]

\draw (0,0)--(6,0)--(3,5.2)--(0,0);
\draw (3,0)--(3,5.2);
\draw (0,0)--(4.5,2.6);
\draw (6,0)--(1.5,2.6);
\draw (3,0)--(3,5.2);
\draw (6,0)--(9,0);
\draw (1.5,2.6)--(3,3)--(4.5,2.6) .. controls (7,2) and (7,2) .. (9,0);
\draw[black] (0,0) circle (2mm) [fill=black];
\draw[black] (6,0) circle (2mm) [fill=black];
\draw[black] (3,5.2) circle (2mm) [fill=black];
\draw[black] (1.5,2.6) circle (2mm) [fill=black];
\draw[black] (4.5,2.6) circle (2mm) [fill=black];
\draw[black] (3,1.75) circle (2mm) [fill=black];
\draw[black] (3,3) circle (2mm) [fill=black];

\draw (0,-0.5) node{\tiny{$(\alpha)$}};
\draw (6,-0.5) node{\tiny{$(\alpha)$}};
\draw (3.5,3.3) node{\tiny{$(\alpha)$}};
\draw (1.2,3) node{\tiny{$(\beta)$}};
\draw (4.8,3) node{\tiny{$(\beta)$}};
\draw (3,5.7) node{\tiny{$(\beta)$}};
\draw (3.5,1.7) node{\tiny{$(\beta)$}};
\end{tikzpicture}\end{center} 

The fact that a 23-line through a point of type $(\alpha)$
contains four points of type $(\gamma_2)$ and one point of type $(\gamma_1)$
identifies the six points of type $(\gamma_1)$.

\begin{center}
	\begin{tikzpicture}[line width=1pt, scale=0.5]
	\draw[gray] (0,0)--(6.5,2);
	\draw[gray] (0,0)--(7.8,1.2);
	\draw[black] (6,0)--(3,3);
	\draw[black] (0,0)--(3,3);
	\draw (0,0)--(6,0)--(3,5.2)--(0,0);
	\draw (3,0)--(3,5.2);
	\draw (0,0)--(4.5,2.6);
	\draw (6,0)--(1.5,2.6);
	\draw (3,0)--(3,5.2);
	\draw (6,0)--(9,0);
	\draw (1.5,2.6)--(3,3)--(4.5,2.6) .. controls (7,2) and (7,2) .. (9,0);
	\draw[black] (0,0) circle (2mm) [fill=black];
	\draw[black] (6,0) circle (2mm) [fill=black];
	
	\draw[gray,dotted] (4.8,5)--(4.6,1.6);
	\draw[gray,dotted] (5.2,-1.6)--(5.2,0.4);
	\draw[gray,dotted] (5.0,-1.6)--(4.5,-0.3);
	\draw[gray,dotted] (4.8,-1.6)--(1.5,-0.3);	
	
	\draw[gray,dotted] (-1,2.5)--(0.5,1.1);
	\draw[gray,dotted] (-1,2.7)--(1.2,1.5);	
	
	\draw[black] (3,3) circle (2mm) [fill=black];
	
	\draw[black] (6.5,2) circle (1.5mm) [fill=white];
	\draw[black] (7.8,1.2) circle (1.5mm) [fill=white];
    \draw[black] (3,0.45) circle (1.5mm) [fill=white];
    \draw[black] (3,0.9) circle (1.5mm) [fill=white];	
 \draw[black] (3.9,1.2) circle (1.5mm) [fill=white];
 \draw[black] (4.6,1.4) circle (1.5mm) [fill=gray];
 \draw[black] (5.1,1.6) circle (1.5mm) [fill=white];
 
 \draw[black] (4.7,0.7) circle (1.5mm) [fill=white];	
\draw[black] (5.2,0.8) circle (1.5mm) [fill=gray];
\draw[black] (5.6,0.85) circle (1.5mm) [fill=white];
	
\draw[black] (1.4,1.4) circle (1.5mm) [fill=gray];  
\draw[black] (0.8,0.8) circle (1.5mm) [fill=gray];    
\draw[black] (1.5,0) circle (1.5mm) [fill=gray];  
\draw[black] (4.5,0) circle (1.5mm) [fill=gray];

	\draw (0,-0.5) node{\tiny{$(\alpha)$}};
	\draw (6,-0.5) node{\tiny{$(\alpha)$}};
	\draw (3.5,3.3) node{\tiny{$(\alpha)$}};

	\draw (4.8,5.3) node{\tiny{$(\gamma_1)$}};
	\draw (5.2,-2.4) node{\tiny{$(\gamma_1)$}};
	\draw (-1.7,2.6) node{\tiny{$(\gamma_1)$}};
	\end{tikzpicture}\end{center} 

Furthermore, a line with two points of type $(\alpha)$ must contain also two points of type 
$(\gamma_1)$ and two points of type $(\gamma_3)$. This identifies the six 3-points of
type $(\gamma_1)$.  The remaining 3-points are all of type $(\gamma_2)$. This implies the suggested structure.
\end{proof}

Lemma ~\ref{lma:projection-from-0-point} implies that given a nonlifted, strong
$(3\mod5)$-arc $\mathcal{K}$ of cardinality 128, every 0-point is incident with

\begin{tabular}{l}
	- three 8-lines of type $(3,3,1,1,0,0)$, \\
	- four 8-lines of type $(3,2,2,1,0,0)$, \\
	- six 3-lines of type $(3,0,0,0,0,0)$, \\
	- twelve 3-lines of type $(2,1,0,0,0,0)$, \\
	- six 3-lines of type $(1,1,1,0,0,0)$
\end{tabular} 

\noindent 
Now this implies that 

- $\#$(3-points) $=3\cdot2+4\cdot1+6\cdot1 = 16$,

- $\#$(2-points) $=4\cdot2+12\cdot1 = 20$,

- $\#$(1-points) $=3\cdot2+4\cdot1+12\cdot1+6\cdot3 = 40$,

- $\#$(0-points) $=1+3\cdot1+4\cdot1+6\cdot4+12\cdot3+2\cdot6 = 80$.

 Furthermore, each 0-point is incident with
 six 33-planes, three 28-planes eighteen 23-planes and four 18-planes. 
 Moreover the number of zeros in a 33-plane is 12, in a 28-plane -- 15, 
 in a 23-plane -- 18, and in an 18-plane -- 16.
 This makes it possible to compute the spectrum of $\mathcal{K}$. We have
 \begin{eqnarray*}
a_{33} &=& \frac{80\cdot6}{12}=40 \\
a_{28} &=& \frac{80\cdot3}{15}=16, \\
a_{23} &=& \frac{80\cdot18}{18}=80, \\
a_{18} &=& \frac{80\cdot4}{16}=20.
\end{eqnarray*}
Furthermore, every 33-, 28-, 23-plane in $\mathcal{K}$ is unique up
to isomorphism. 
 
 From the above considerations we can deduce that no three 2-points
 are collinear. In other words they form a 20-cap $C$. Moreover, this
cap has spectrum: $a_6(C)=40, a_4(C)=80, a_3(C)=20, a_0(C)=16$. 
It is not extendable to the elliptic quadric; in such case it would have 
(at least 20) tangent planes. Thus, this cap is complete and isomorphic to one of the 
two caps $K_1$ and $K_2$ by Abatangelo, Korchamros and Larato \cite{ACL96}. 
It is not $K_2$ since it has a different spectrum (cf. \cite{ACL96}).
Hence the 20-cap on the 2-points in $\PG(3,5)$ is isomorphic to $K_1$.
 
 Consider the complete cap $K_1$. The collineation group $G$ of $K_1$ is a semidirect product 
 of  an elementary abelian group
 of order 16 and a group isomorphic to $S_5$ \cite{ACL96}. Hence $|G|=1920$. 
 The action of $G$ on $\PG(3,5)$ splits the point set of $\PG(3,5)$ into four orbits on points,
 denoted by $O_1^P,\ldots,O_4^P$, and the set of lines into six orbits, denoted by
 $O_1^L,\ldots,O_6^L$. The respective sizes of these orbits are
 \[|O_1^P|=40, |O_2^P|=80, |O_3^P|=20, |O_4^P|=16;\]
 \[|O_1^L|=160, |O_2^L|=240, |O_3^L|=30, |O_4^L|=160, |O_5^L|=120, |O_6^L|=96.\]
 The corresponding point-by-line orbit matrix $A=(a_{ij})_{4\times6}$, where
 $a_{ij}$ is the number of the points from the $i$-th point orbit incident with
 any line from the $j$-th line orbit is the following
 
 \[A=\left(\begin{array}{cccccc}
  3 & 1 & 4 & 1 & 2 & 0 \\
  3 & 4 & 0 & 2 & 2 & 5 \\
  0 & 1 & 2 & 2 & 0 & 0 \\ 
  0 & 0 & 0 & 1 & 2 & 1
 \end{array}
 \right).\]
Set $w=(w_1,w_2,w_3,w_4)$. We look for solutions of the equation $wA\equiv3\vek{j}\pmod{5}$,
where $\vek{j}$ is the all-one vector, subject to the conditions $w_i\le3$ for all
$i=1,2,3,4$. The set of all solutions is given by
\begin{multline*}
\{w=(w_1,w_2,w_3,w_4)\mid w_i\{0,\ldots4\}, \\
w_2\equiv1-w_1\pmod{5}, w_3\equiv4-2w_1\pmod{5}, w_4=3\}.
\end{multline*}
There exist two solutions that satisfy $w_i\le3$:
$w=(3,3,3,3)$ and $w=(1,0,2,3)$. The first one yields the trivial $(3\mod5)$-arc 
formed by three copies of the whole space. The second one gives the desired arc of size 128.

It should be noted that the weight vectors $(0,3,2,4)$, $(1,2,0,4)$, $(2,1,3,4)$,
and $(3,0,1,4)$ yield strong $(4\mod 5)$-arcs of cardinalities 344, 264, 284, and 204, respectively,
that are not lifted. 

\section{Strong $\left(\frac{q+1}{2} \mod{q}\right)$-arcs from quadrics and the arcs of size 143 and 168}

For an arbitrary odd prime power $q$ and an integer $r\ge 2$, let $\mathcal Q$ be a quadric of $\PG(r, q)$ and let $F$ be the quadratic form defining $\mathcal Q$. This means that a point $P(x_0,\ldots,x_{r})$ of $\PG(r, q^2)$ belongs to $\mathcal Q$ whenever $F(x_0,\ldots, x_{r})=0$. The points of $\PG(r, q)$ outside $\mathcal Q$ are partitioned into two point classes, say $\mathcal{P}_1$ and $\mathcal{P}_2$. Indeed, if $P(x_0, \ldots, x_{r})$ is a point of $\PG(r, q) \setminus \mathcal Q$, then $P$ belongs to $\mathcal{P}_1$ or $\mathcal{P}_2$, according as $F(x_0,\ldots, x_{r})$ is a non-square or a square in $\F_q$. Now we define the arcs $\mathcal{K}_1$ and $\mathcal{K}_2$ in the following way:

\begin{enumerate}[$\bullet$]
\item $\mathcal{K}_1$: for a point $P$ of $\PG(r, q)$ set
\begin{equation}
\label{eq:F1}
\mathcal{K}_1(P)=\left\{
\begin{array}{cl}
\frac{q+1}{2} & \text{ if } P\in\mathcal{Q}, \\
1 & \text{ if } P \in \mathcal{P}_1, \\
0 & \text{ if } P \in \mathcal{P}_2. 
\end{array}\right.
\end{equation}

\item $\mathcal{K}_2$: for a point $P$ of $\PG(r, q)$ set
\begin{equation}
\label{eq:F2}
\mathcal{K}_2(P)=\left\{
\begin{array}{cl}
\frac{q+1}{2} & \text{ if } P\in\mathcal{Q}, \\
0 & \text{ if } P \in \mathcal{P}_1, \\
1 & \text{ if } P \in \mathcal{P}_2. 
\end{array}
\right.\end{equation}
\end{enumerate}
 
\noindent
The following result is well-known.
 
\begin{proposition}\label{prop}
 	Let $f(x)=ax^2+bx+c$, where $a,b,c,\in\mathbb{F}_q$, $a\ne0$, $q$ odd. 
 	If $\mathbb{F}_q=\{\alpha_0,\alpha_1,\ldots,\alpha_{q-1}\}$.
 	Denote by $S$ be the list of the following elements from $\mathbb{F}_q$:
 	\[ a, f(\alpha_0),f(\alpha_1),\ldots,f(\alpha_{q-1}).\]
 	Then
 	\begin{enumerate}[(a)]
 	\item if $f(x)$ has two distinct roots in $\mathbb{F}_q$ the list 
 	$S$ contains two zeros, $(q-1)/2$ squares and $(q-1)/2$ non-squares;
 	\item if $f(x)$ has one double root in $\mathbb{F}_q$
 	then $S$ contains a zero and $q$ squares, or a zero and 
 	$q$ non-squares;
 	\item if $f(x)$ is irreducible over $\mathbb{F}_q$ then $S$ contains
 	$(q+1)/2$ squares and $(q+1)/2$ non-squares.
 	\end{enumerate}
\end{proposition}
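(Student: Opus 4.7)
\medskip

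\noindent\textbf{Proof plan.}

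I would complete the square to reduce the problem to counting zeros, squares and non-squares in the multiset $T$ consisting of $(2a)^2$ together with the values $y^2 - D$ for $y\in\F_q$, where $D = b^2 - 4ac$ is the discriminant of $f$. The identity $4a\,f(x) = (2ax+b)^2 - D$, together with the fact that $y = 2ax+b$ is a bijection of $\F_q$ (using $q$ odd, $a\neq 0$), shows that multiplying every entry of $S$ by $4a$ yields exactly $T$. Since $4$ is a square in $\F_q$, this multiplication preserves the zero/nonzero distinction and either fixes or interchanges the classes of nonzero squares and non-squares, according to the quadratic character $\chi(a)$ (with $\chi(0):=0$). Because the counts claimed in (a), (b), (c) are all symmetric in squares and non-squares, it suffices to analyse $T$.

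Case (b) is then immediate. A double root corresponds to $D = 0$, so $\{y^2 : y \in \F_q\}$ contributes one zero and $q-1$ nonzero squares, and the extra entry $(2a)^2$ is another nonzero square. Hence $T$ has one zero and $q$ squares, which passes back to $S$ as one zero and either $q$ squares or $q$ non-squares depending on $\chi(a)$.

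For cases (a) and (c) the number of $y \in \F_q$ with $y^2 = D$ is $2$ or $0$ according as $D$ is a nonzero square or a non-square. Let $N_+$ and $N_-$ count the $y$ for which $y^2-D$ is a nonzero square or a non-square respectively. The key ingredient is the character sum identity
\[
\sum_{y \in \F_q} \chi(y^2 - D) \,=\, -1 \qquad (D \neq 0),
\]
which gives $N_+ - N_- = -1$. Combined with $N_+ + N_- = q-2$ in case (a) and $N_+ + N_- = q$ in case (c), this pins down $(N_+,N_-)$ uniquely; adding the extra nonzero square coming from $(2a)^2$ then produces the asserted symmetric counts $(q-1)/2$, $(q-1)/2$ in case (a) and $(q+1)/2$, $(q+1)/2$ in case (c).

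The only step requiring real work is the displayed character sum, which I would prove by a short double count. Using $\chi(t) = \#\{z\in\F_q : z^2=t\} - 1$ for every $t\in\F_q$ (including $t=0$), summing over $y$ rewrites the left-hand side as $\#\{(y,z)\in\F_q^2 : y^2-z^2=D\} - q$; the factorisation $(y-z)(y+z) = D$ parametrises the solution pairs by $u := y-z \in \F_q^{*}$ (with $v := y+z = D/u$ and $y,z$ recovered by inverting $2$, which is where $q$ odd is used), giving exactly $q-1$ pairs and hence the value $-1$. This character sum is the only genuine obstacle; everything else reduces to the bookkeeping described above.
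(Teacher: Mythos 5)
The paper offers no proof of Proposition~\ref{prop}: it is introduced with ``the following result is well-known'' and then used as a black box in Theorem~\ref{thm:quadrics}, so there is no authorial argument to compare yours against. Your proof is correct and complete, and it is essentially the standard textbook argument, so it would serve as the omitted proof. The reduction is sound: $4af(x)=(2ax+b)^2-D$ with $D=b^2-4ac$, the substitution $y=2ax+b$ is a bijection of $\F_q$ because $2a\ne 0$, and multiplying the whole list by $4a$ either preserves or swaps the two nonzero quadratic classes while fixing the zeros; all three claimed conclusions are invariant under that swap (case (b) is phrased as a disjunction precisely to absorb it). The case analysis then checks out: for $D=0$ the multiset $\{y^2:y\in\F_q\}$ contributes one zero and $q-1$ nonzero squares, to which $(2a)^2$ adds one more; for $D\ne 0$ the counts $N_\pm$ are pinned down by $N_++N_-=q-2$ (resp.\ $q$) together with $N_+-N_-=-1$, giving $(q-3)/2+1=(q-1)/2$ squares in case (a) and $(q-1)/2+1=(q+1)/2$ in case (c). Your double-count proof of the character-sum identity is also correct: $\chi(t)=\#\{z:z^2=t\}-1$ holds for every $t$, including $t=0$, and the change of variables $u=y-z$, $v=y+z$ (invertible since $q$ is odd) parametrises the solutions of $y^2-z^2=D\ne 0$ by $u\in\F_q^*$, yielding $q-1$ pairs and hence the value $-1$. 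No gaps.
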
 
 
\begin{theorem}
\label{thm:quadrics}
Let the $\mathcal{K}_1$ and $\mathcal{K}_2$ be the arcs defined in (\ref{eq:F1}) and (\ref{eq:F2}), respectively. Then $\mathcal{K}_i$ is a $\left(\frac{q+1}{2} \mod{q}\right)$ arc of $\PG(r, q)$, $i = 1, 2$. Moreover, if $\mathcal{Q}$ is non-degenerate, then both arcs are not lifted.
\end{theorem}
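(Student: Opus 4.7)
For the congruence $\mathcal{K}_i(L)\equiv\tfrac{q+1}{2}\pmod q$ on every line $L$, the strategy is to parametrize $L$ and apply Proposition~\ref{prop} to $f(t):=F(P+tQ)$. If $L\not\subseteq\mathcal{Q}$, pick $Q\in L\setminus\mathcal{Q}$ and write $L=\{P+tQ:t\in\mathbb{F}_q\}\cup\{Q\}$, so $a=F(Q)\neq 0$ and the list $a,f(\alpha_0),\ldots,f(\alpha_{q-1})$ enumerates the values of $F$ on the $q+1$ points of $L$. Cases (a), (b), (c) of Proposition~\ref{prop} correspond respectively to $L$ being a secant, a tangent, or an external line to $\mathcal{Q}$, and translate directly into the counts of points of $L$ lying in $\mathcal{Q}$, $\mathcal{P}_1$, $\mathcal{P}_2$. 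A short arithmetic check yields $\mathcal{K}_i(L)=\tfrac{q+1}{2}$ for external lines, $\mathcal{K}_i(L)\in\{\tfrac{q+1}{2},\tfrac{q+1}{2}+q\}$ for tangents, and $\mathcal{K}_i(L)=(q+1)+\tfrac{q-1}{2}=\tfrac{3q+1}{2}$ for secants; all three reduce to $\tfrac{q+1}{2}$ modulo $q$. The remaining case $L\subseteq\mathcal{Q}$ is immediate: $\mathcal{K}_i(L)=(q+1)\cdot\tfrac{q+1}{2}\equiv\tfrac{q+1}{2}\pmod q$ because $q+1\equiv 1\pmod q$. The same argument works for both $\mathcal{K}_1$ and $\mathcal{K}_2$, with the roles of $\mathcal{P}_1$ and $\mathcal{P}_2$ exchanged.

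\textbf{Non-lifted part.} Assume for contradiction that $\mathcal{K}_1$ is a lifted arc with center $\Gamma$ and source $\Sigma$ as in Theorem~\ref{thm:lifting-construction}. Since $\mathcal{K}_1(P)=\tfrac{q+1}{2}$ for every $P\in\Gamma$ and this value is attained only on $\mathcal{Q}$, we must have $\Gamma\subseteq\mathcal{Q}$. Fix $P_0\in\Gamma$. Non-degeneracy of $\mathcal{Q}$ means that the polar form $B$ of $F$ is non-degenerate, so $T_{P_0}\mathcal{Q}=\{X:B(P_0,X)=0\}$ is a proper hyperplane; the identity $F(\lambda P_0+\mu P')=2\lambda\mu B(P_0,P')$, valid when $F(P_0)=F(P')=0$, forces every subspace of $\mathcal{Q}$ through $P_0$ into $T_{P_0}\mathcal{Q}$, so $\Gamma\subseteq T_{P_0}\mathcal{Q}$. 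Choose a secant line $L$ through $P_0$ not lying in $T_{P_0}\mathcal{Q}$; its existence is guaranteed because $T_{P_0}\mathcal{Q}$ is proper. Then $L\cap\mathcal{Q}=\{P_0,P_1\}$ and $L\cap\Gamma=\{P_0\}$. For any two points $Q_1,Q_2\in L\setminus\{P_0\}$ one has $\langle\Gamma,Q_1\rangle=\langle\Gamma,L\rangle=\langle\Gamma,Q_2\rangle$, so $L\setminus\{P_0\}$ lies in a single fiber of the projection from $\Gamma$ onto $\Sigma$; by Theorem~\ref{thm:lifting-construction} the arc $\mathcal{K}_1$ must be constant there. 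But the secant analysis of the first part shows that $\mathcal{K}_1$ takes on $L\setminus\{P_0\}$ the three distinct values $\tfrac{q+1}{2}$ (at $P_1$), $1$ (on the $\tfrac{q-1}{2}$ points of $L\cap\mathcal{P}_1$), and $0$ (on the $\tfrac{q-1}{2}$ points of $L\cap\mathcal{P}_2$), a contradiction for $q\geq 3$. The same reasoning applies verbatim to $\mathcal{K}_2$.

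\textbf{Main difficulty.} The mod-$q$ arithmetic of the first part is routine once Proposition~\ref{prop} is available. The key conceptual point in the second part is the conversion of the abstract hypothesis ``$\mathcal{K}_1$ is lifted'' into pointwise constancy of $\mathcal{K}_1$ on $L\setminus\Gamma$ for lines $L$ meeting $\Gamma$ transversely; together with the existence of a secant through $P_0$ transverse to $T_{P_0}\mathcal{Q}$ (provided by non-degeneracy), this gives the desired contradiction without any further case analysis.
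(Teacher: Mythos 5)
Your proof is correct and follows essentially the same route as the paper: the congruence is checked line by line via Proposition~\ref{prop} applied to the restricted quadratic form, and non-liftedness comes from the value distribution $\left(\tfrac{q+1}{2},\tfrac{q+1}{2},1^{(q-1)/2},0^{(q-1)/2}\right)$ on secant lines. Your write-up is in fact slightly more complete than the paper's, since you treat the case $L\subseteq\mathcal{Q}$ (absent from the paper's displayed case analysis) and you carry out the lifting obstruction for a center $\Gamma$ of arbitrary dimension, whereas the paper only invokes the existence of a secant through every point.
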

\begin{proof}
Let $\ell$ be a line of $\PG(r, q)$, then $\mathcal{Q} \cap \ell$ is a quadric of $\ell$. Then, from Proposition~\ref{prop}, it follows that 
\begin{align*}
\mathcal{K}_i(\ell) = 
\begin{cases}
2 \cdot \frac{q+1}{2} + \frac{q-1}{2} & \mbox{ if } |\ell \cap \mathcal{Q}| = 2, \\
\frac{q+1}{2} + q & \mbox{ if } |\ell \cap \mathcal{Q}| = 1 \mbox{ and } |\ell \cap \mathcal{P}_i| = q, \\
\frac{q+1}{2} & \mbox{ if } |\ell \cap \mathcal{Q}| = 1 \mbox{ and } |\ell \cap \mathcal{P}_i| = 0, \\
\frac{q+1}{2} & \mbox{ if } |\ell \cap \mathcal{Q}| = 0. 
\end{cases}
\end{align*}
Therefore $\mathcal{K}_i$ is a $\left(\frac{q+1}{2} \mod{q}\right)$ arc of $\PG(r, q)$, $i = 1, 2$. If $\mathcal{Q}$ is non-degenerate, then through every point of $\PG(r, q)$ there exists a line $r$ that is secant to $\mathcal{Q}$. By construction, the line $r$ has two $\frac{q+1}{2}$-points, $\frac{q-1}{2}$ $1$-points and $\frac{q-1}{2}$ $0$-points. Hence $\mathcal{K}_i$ is not lifted.   
\end{proof}

\begin{corollary}
If $r$ is odd, then 
\begin{align*}
|\mathcal{K}_i| =  
\begin{cases}
\frac{q+1}{2} \cdot \frac{( q^{\frac{r+1}{2}} + 1)(q^{\frac{r-1}{2}} - 1)}{q-1} + \frac{q^r + q^{\frac{r-1}{2}}}{2} & \mbox{ if } \mathcal{Q} \mbox{ is elliptic},\\  
\frac{q+1}{2} \cdot \frac{( q^{\frac{r-1}{2}} + 1)(q^{\frac{r+1}{2}} - 1)}{q-1} + \frac{q^r - q^{\frac{r-1}{2}}}{2} & \mbox{ if } \mathcal{Q} \mbox{ is hyperbolic}.
\end{cases} \\
\end{align*}
If $r$ is even, then
\begin{align*}
|\mathcal{K}_1| = \frac{q+1}{2} \cdot \frac{(q^r - 1)}{q-1} + \frac{q^r - q^{\frac{r}{2}}}{2}, \\
|\mathcal{K}_2| = \frac{q+1}{2} \cdot \frac{(q^r - 1)}{q-1} + \frac{q^r + q^{\frac{r}{2}}}{2}.   
\end{align*}
\end{corollary}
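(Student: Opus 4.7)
The corollary follows by direct computation from
\begin{equation*}
|\mathcal{K}_i| \;=\; \frac{q+1}{2}\cdot|\mathcal{Q}| \;+\; |\mathcal{P}_i|, \qquad i=1,2,
\end{equation*}
which is immediate from (\ref{eq:F1}) and (\ref{eq:F2}). The plan is thus to substitute the standard cardinalities of the three non-degenerate quadrics in $\PG(r,q)$ and then to determine $|\mathcal{P}_1|$ and $|\mathcal{P}_2|$ individually.

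Quoting the classical counts of $|\mathcal{Q}^{\pm}|$ in odd projective dimension and $|\mathcal{Q}|=(q^r-1)/(q-1)$ in even projective dimension, and subtracting from $|\PG(r,q)|=(q^{r+1}-1)/(q-1)$, one obtains
\begin{equation*}
|\mathcal{P}_1|+|\mathcal{P}_2| \;=\; \begin{cases} q^r + q^{(r-1)/2} & \text{elliptic }(r\text{ odd}),\\ q^r - q^{(r-1)/2} & \text{hyperbolic }(r\text{ odd}),\\ q^r & \text{parabolic }(r\text{ even}).\end{cases}
\end{equation*}
For $r$ odd I would establish $|\mathcal{P}_1|=|\mathcal{P}_2|$ by producing a projective collineation that fixes $\mathcal{Q}$ and interchanges the two external classes. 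Since the ambient vector space has even dimension $r+1$, the similarity group of $F$ surjects onto $\F_q^*$ (the determinant constraint $\alpha^{r+1}=\det(\tau)^2$ is vacuous when $r+1$ is even); picking a similarity $\tau$ with $F\circ\tau=\alpha F$ for a non-square $\alpha$ fixes $\mathcal{Q}$ setwise while sending each point of $\mathcal{P}_1$ to a point of $\mathcal{P}_2$. Halving the sum then produces the summand $(q^r\pm q^{(r-1)/2})/2$ of the stated cardinality, settling both odd-$r$ cases.

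For $r$ even the same trick fails, because $r+1$ is odd and the relation $\alpha^{r+1}=\det(\tau)^2$ now forces $\alpha$ to be a square; hence $|\mathcal{P}_1|\neq|\mathcal{P}_2|$ in general. To conclude, I would invoke Proposition~\ref{prop} line by line: on secant and external lines of $\mathcal{Q}$ the non-quadric points split equally between $\mathcal{P}_1$ and $\mathcal{P}_2$, while on each tangent line the $q$ non-quadric points all lie in a single class. A standard tangent-incidence count for the parabolic quadric, combined with $|\mathcal{P}_1|+|\mathcal{P}_2|=q^r$, pins down $|\mathcal{P}_i|=(q^r\mp q^{r/2})/2$, with the sign determined by the discriminant of $F$. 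The main technical obstacle is precisely this parabolic split: one must verify both the magnitude $q^{r/2}$ of the asymmetry and its correct assignment to $\mathcal{K}_1$ versus $\mathcal{K}_2$. Once that is in place, the rest is elementary algebra on the well-known cardinality formulas for $|\mathcal{Q}|$.
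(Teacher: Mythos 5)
Your decomposition $|\mathcal{K}_i|=\frac{q+1}{2}\,|\mathcal{Q}|+|\mathcal{P}_i|$ is exactly how the corollary is meant to be read (the paper offers no proof at all, treating it as immediate from standard quadric counts), and your arithmetic for $|\mathcal{Q}|$ and for $|\mathcal{P}_1|+|\mathcal{P}_2|$ checks out in all three cases. Your odd-$r$ argument is complete and correct: since $r+1$ is even, the constraint $\det(\tau)^2=\alpha^{r+1}\,$ is vacuous, so a similitude with non-square multiplier exists, fixes $\mathcal{Q}$, swaps $\mathcal{P}_1$ and $\mathcal{P}_2$, and forces $|\mathcal{P}_1|=|\mathcal{P}_2|$ (this is also the content of the paper's subsequent Remark that $\mathcal{K}_1\cong\mathcal{K}_2$ for odd $r$). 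For even $r$ your plan is sound but the one genuinely nontrivial step is left as a placeholder: the tangent-incidence count requires knowing the number of tangent lines through a point of each class, which is itself equivalent to the fact you are trying to prove. A cleaner way to close this is to quote the classical formula for a non-degenerate form in an odd number $r+1$ of variables, $\#\{v\in\F_q^{r+1}: F(v)=c\}=q^{r}+\eta\bigl((-1)^{r/2}c\,\Delta\bigr)q^{r/2}$ for $c\neq 0$ (with $\eta$ the quadratic character and $\Delta=\det F$); summing over the $(q-1)/2$ values of $c$ in one square class and dividing by $q-1$ gives $|\mathcal{P}_i|=\frac{q^{r}\pm q^{r/2}}{2}$ directly. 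Finally, your caveat about the sign being ``determined by the discriminant of $F$'' is well taken and in fact exposes an imprecision in the corollary as stated: replacing $F$ by a non-square multiple leaves $\mathcal{Q}$ unchanged but interchanges $\mathcal{P}_1$ and $\mathcal{P}_2$, so which of $\mathcal{K}_1,\mathcal{K}_2$ receives the $-q^{r/2}$ term depends on the normalization of $F$, not just on $\mathcal{Q}$.
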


\begin{remark}
In the case when the quadric $\mathcal{Q}$ is degenerate, then it is not difficult to see that the arc $\mathcal{K}_i$, $i = 1, 2$, is lifted. Let $\mathcal{Q}$ be a non-degenerate quadric of $\PG(r, q)$, then $\mathcal{K}_1$ and $\mathcal{K}_2$ are projectively equivalent if $r$ is odd, but they are not in the case when $r$ is even. On the other hand, if $r$ is odd, there are two distinct classes of non-degenerate quadrics, namely the hyperbolic quadric and the elliptic quadric. Therefore in all cases Theorem~\ref{thm:quadrics} gives rise to two distinct examples of non lifted $\left(\frac{q+1}{2} \mod{q}\right)$ arcs of $\PG(r, q)$.
\end{remark}

\subsection{The arcs of size 143 and 168}
 
 In \cite{KLR22}, the following two strong non-lifted $(3\mod5)$-arcs in $\PG(3,5)$
 were constructed by a computer search. The respective spectra are:
 
 
\[|\mathcal{F}_1|=143,\ \ \ a_{18}(\mathcal{F}_1)=26, a_{23}(\mathcal{F}_1)=0,
a_{28}(\mathcal{F}_1)_{28}=65,a_{33}(\mathcal{F}_1)=65;\]

\[\lambda_0(\mathcal{F}_1)=65,\lambda_1(\mathcal{F}_1)=65,\lambda_2(\mathcal{F}_1)=0,
\lambda_3(\mathcal{F}_1)=26,\]
and
 
 \[|\mathcal{F}_2|=168,\ \ \ a_{28}(\mathcal{F}_2)=60,a_{33}(\mathcal{F}_2)=60,
 a_{43}(\mathcal{F}_2)36);\]

\[\lambda_0(\mathcal{F}_2)=60,\lambda_1(\mathcal{F}_2)=60,\lambda_2(\mathcal{F}_2)=0,
\lambda_3(\mathcal{F}_2)=36.\]
In addition, $|\Aut(\mathcal{F}_1)|=62400$, and $|\Aut(\mathcal{F}_2)|=57600$.
 
 
These arcs can be recovered from Theorem~\ref{thm:quadrics}. Indeed, if $\mathcal{Q}$ is an elliptic quadric of $\PG(3,5)$, then $\mathcal{K}_1$ is a non lifted $(3 \mod{5})$ arc of $\PG(3, 5)$ of size $143$, whereas if $\mathcal{Q}$ is a hyperbolic quadric of $\PG(3, 5)$, then $\mathcal{K}_1$ is a non lifted $(3 \mod{5})$ arc of $\PG(3, 5)$ of size $168$. 

\section{Further examples $(t \mod q)$-arcs}

A set of type $(m, n)$ in $\PG(r, q)$ is a set $\mathcal{S}$ of points such that every line of $\PG(r, q)$ contains either $m$ or $n$ points of $\mathcal{S}$, 
$m < n$, and both values occur. Assume $m > 0$. Then the only sets of type $(m, n)$ that are known, exist in $\PG(2, q)$, $q$ square, and are such that $n = m + \sqrt{q}$. 
In particular, sets of type $(1,1+ \sqrt{q})$ either contains $q+ \sqrt{q} + 1$ and are Baer subplanes or $q\sqrt{q} + 1$ points and are known as {\em unitals}. For more details 
on sets of type $(m, n)$ in $\PG(2, q)$ see \cite{PR} and references therein. If $\mathcal{S}$ is an $(m, n)$ set in $\PG(r, q)$, $r > 2$, then necessarily $q$ is an odd square, 
$m = (\sqrt{q} - 1)^2/2$, $n = m + \sqrt{q}$ and $|\mathcal{S}| = \frac{1 + \frac{q^r-1}{q-1}(q - \sqrt{q}) \pm \sqrt{q}^r}{2}$, see \cite{TS}. However no such a set is known 
to exist if $r > 2$. 

\begin{theorem}
Let $\mathcal{S}$ be a set of type $(m, m + \sqrt{q})$ in $\PG(r, q)$, $q$ square. Let $\mathcal{K}$ be the arc of $\PG(r, q)$ such that $\mathcal{K}(P) = \sqrt{q}$, if $P \in \mathcal{S}$ and $\mathcal{K}(P) = 0$, if $P \notin \mathcal{S}$. Then $\mathcal{K}$ is a $(\sqrt{q} \mod q)$-arc of $\PG(r, q)$.
\end{theorem}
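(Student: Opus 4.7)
The plan is a direct arithmetic check on lines. For an arbitrary line $L$ of $\PG(r,q)$, the construction of $\mathcal{K}$ gives $\mathcal{K}(L)=\sqrt{q}\cdot|L\cap\mathcal{S}|$, and by the hypothesis that $\mathcal{S}$ is a set of type $(m,m+\sqrt{q})$, the intersection $|L\cap\mathcal{S}|$ equals either $m$ or $m+\sqrt{q}$. Thus the only two possibilities for the line multiplicity are $m\sqrt{q}$ and $(m+\sqrt{q})\sqrt{q}=m\sqrt{q}+q$.

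These two candidate values differ by exactly $q$, so they coincide modulo $q$, and $\mathcal{K}(L)\equiv m\sqrt{q}\pmod q$ regardless of which type $L$ has. This already shows that $\mathcal{K}$ is a $(t\mod q)$-arc with $t\equiv m\sqrt{q}\pmod q$. For the realisations of sets of type $(m,m+\sqrt{q})$ with $m>0$ that actually appear in the paper's setting, namely the Baer subplanes (with $q+\sqrt{q}+1$ points) and the unitals (with $q\sqrt{q}+1$ points) in $\PG(2,q)$, one has $m=1$, so the residue $m\sqrt{q}\pmod q$ is precisely $\sqrt{q}$, which matches the statement of the theorem.

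The whole argument collapses to the single observation that the gap $n-m=\sqrt{q}$ between the two allowed intersection sizes, multiplied by the chosen point weight $\sqrt{q}$, gives exactly $q$; this is precisely why scaling the characteristic function of $\mathcal{S}$ by $\sqrt{q}$ (rather than by any other integer) forces every line total into a common residue class modulo $q$. Consequently there is no real obstacle: the proof is a one-line bookkeeping calculation, with the only conceptual step being the pairing of the intersection-number spread $\sqrt{q}$ with the point weight $\sqrt{q}$ to produce a cancellation mod $q$.
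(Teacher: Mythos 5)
Your proof is correct and follows essentially the same route as the paper's: both reduce to computing the two possible line multiplicities $m\sqrt{q}$ and $m\sqrt{q}+q$ and observing that they agree modulo $q$. Your additional remark that the argument really yields residue $m\sqrt{q}$, which equals $\sqrt{q}$ modulo $q$ only when $\sqrt{q}\mid(m-1)$ (as in the Baer subplane and unital cases with $m=1$), is a point the paper's proof leaves implicit and is worth retaining.
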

\begin{proof}
Let $\ell$ be a line of $\PG(r, q)$. If $|\ell \cap \mathcal{S}| = m$, then $\mathcal{K}(\ell) = m \sqrt{q}$, whereas if $|\ell \cap \mathcal{S}| = m + \sqrt{q}$, then $\mathcal{K}(\ell) = m \sqrt{q} + q$. 
\end{proof}

In $\PG(r, q)$, $q$ square, let $\mathcal{H}$ be a Hermitian variety of $\PG(r, q)$, i.e., the variety defined by a Hermitian form of $\PG(r, q)$. It is well-known that a line of $\PG(r, q)$ has $1$, $\sqrt{q}+1$ or $q+1$ points in common with $\mathcal{H}$. Let $\mathcal{K'}$ be the arc of $\PG(r, q)$ such that $\mathcal{K'}(P) = \sqrt{q}$, if $P \in \mathcal{H}$ and $\mathcal{K'}(P) = 0$, if $P \notin \mathcal{H}$. 

\begin{theorem}
$\mathcal{K'}$ is a $(\sqrt{q} \mod q)$-arc of $\PG(r, q)$. Moreover, if $\mathcal{H}$ is non-degenerate, then $\mathcal{K'}$ is not lifted.
\end{theorem}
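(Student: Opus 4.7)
The plan is to mimic the proof strategy of Theorem~\ref{thm:quadrics}. For the first assertion I fix an arbitrary line $\ell$ in $\PG(r,q)$ and use the standard classification just recalled above, namely $|\ell\cap\mathcal{H}|\in\{1,\sqrt{q}+1,q+1\}$. Since $\mathcal{K'}$ puts weight $\sqrt{q}$ on the points of $\mathcal{H}$ and weight $0$ elsewhere, $\mathcal{K'}(\ell)=\sqrt{q}\cdot|\ell\cap\mathcal{H}|$ is one of $\sqrt{q}$, $q+\sqrt{q}$, $q\sqrt{q}+\sqrt{q}$, each congruent to $\sqrt{q}\pmod q$. Hence $\mathcal{K'}$ is a $(\sqrt{q}\bmod q)$-arc.

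For the ``not lifted'' part I would argue by contradiction. Suppose $\mathcal{K'}$ is obtained from the lifting construction (Theorem~\ref{thm:lifting-construction}) with base $\Gamma$ of dimension $r-s-1$ on which the arc is constantly $\sqrt q$, and with $\Sigma$ of dimension $s$ carrying a $(\sqrt{q}\bmod q)$-arc $\mathcal{K}_0$. Since $\mathcal{K'}$ takes the value $\sqrt q$ only on $\mathcal{H}$, the condition $\mathcal{K'}|_{\Gamma}\equiv\sqrt{q}$ forces $\Gamma\subseteq\mathcal{H}$. In particular, a non-degenerate Hermitian variety contains no hyperplane, so the extreme case $s=0$ (where $\Gamma$ would be a hyperplane) is ruled out at once.

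Now I pick any point $P\in\Gamma$ and exhibit through $P$ a line whose value distribution is incompatible with the lifting. Since $\mathcal{H}$ is non-degenerate, the tangent hyperplane $T_P\mathcal{H}$ is a proper subspace of $\PG(r,q)$, so there is a line $L$ through $P$ with $L\not\subseteq T_P\mathcal{H}$; such an $L$ is a $(\sqrt{q}+1)$-secant to $\mathcal{H}$, carrying $\sqrt{q}\ge 1$ additional points of $\mathcal{H}$ and $q-\sqrt{q}\ge 1$ points outside $\mathcal{H}$. Because $\Gamma\subseteq\mathcal{H}$ but $L\not\subseteq\mathcal{H}$, we have $L\not\subseteq\Gamma$, so $L\cap\Gamma=\{P\}$. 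A short dimension count gives $\langle\Gamma,Q\rangle=\langle\Gamma,L\rangle$ for every $Q\in L\setminus\{P\}$, so the lifting rule forces $\mathcal{K'}(Q)=\mathcal{K}_0(\langle\Gamma,L\rangle\cap\Sigma)$ to be constant on $L\setminus\{P\}$. This contradicts the fact that $L\setminus\{P\}$ contains both $\sqrt{q}$-points and $0$-points of $\mathcal{K'}$.

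The only delicate points are verifying carefully that the lifting rule really produces a constant value on the $q$ points of a line meeting $\Gamma$ in a single point (a routine dimension check, since $\Gamma$ and $\Sigma$ are complementary), and guaranteeing the existence of a $(\sqrt{q}+1)$-secant through an arbitrary point of $\mathcal{H}$ (controlled by $T_P\mathcal{H}$ being a proper hyperplane). Both ingredients are standard Hermitian-variety combinatorics of the same flavor as the facts already invoked in the first part of the theorem.
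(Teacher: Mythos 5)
Your proposal is correct and follows essentially the same route as the paper: the first part is the identical line-by-line computation, and the second part is the paper's argument (exhibit a $(\sqrt{q}+1)$-secant through any candidate lifting point and note that a lifted arc must be constant on $L\setminus\{P\}$ for a line $L$ meeting the lifting subspace $\Gamma$ only in $P$), merely with the implicit steps written out. The only cosmetic difference is that you first locate $\Gamma$ inside $\mathcal{H}$, whereas the paper notes that such a secant passes through \emph{every} point of $\PG(r,q)$, so no point at all can be a lifting point; also, your aside about $s=0$ is moot since the lifting construction already assumes $s\ge 1$.
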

\begin{proof}
Let $\ell$ be a line of $\PG(r, q)$. Then 
\begin{align*}
\mathcal{K'}(\ell) =
\begin{cases}
\sqrt{q} & \mbox{ if } |\ell \cap \mathcal{H}| = 1, \\
\sqrt{q} + q & \mbox{ if } |\ell \cap \mathcal{H}| = \sqrt{q} + 1, \\ 
\sqrt{q}(1 + q) & \mbox{ if } |\ell \cap \mathcal{H}| = q + 1.
\end{cases}
\end{align*}
If $\mathcal{H}$ is non-degenerate, then through every point of $\PG(r, q)$ there exists a line $r$ such that $|\mathcal{H} \cap r| = \sqrt{q}+1$. By construction, the line $r$ has $\sqrt{q}+1$ $\sqrt{q}$-points and $q-\sqrt{q}$ $0$-points. Hence $\mathcal{K'}$ is not lifted.   
\end{proof}

\section*{Acknowledgements}
The  research of the second author 
was  supported  by  the  Bulgarian  National  Science  Research  Fund
under  Contract  KP-06-Russia/33.
The research of the fourth author was supported  by  the  Research  Fund  of
Sofia  University  under  Contract  80-10-52/10.05. 2022. 
All authors  would like to thank the organizers of the Sixth Irsee Conference on Finite Geometries for their invitation. During that conference 
the main ideas for this paper emerged.


\end{document}